\DeclareFontFamily{U}{euf}{}
\DeclareFontShape{U}{euf}{m}{n}{%
  <5><6><7><8><9>gen*eufm%
  <10><10.95><12><14.4><17.28><20.74><24.88>eufm10%
  }{}
\DeclareFontShape{U}{euf}{b}{n}{%
  <5><6><7><8><9>gen*eufb%
  <10><10.95><12><14.4><17.28><20.74><24.88>eufb10%
  }{}
\DeclareFontFamily{U}{msb}{}
\DeclareFontShape{U}{msb}{m}{n}{%
  <5><6><7><8><9>gen*msbm%
  <10><10.95><12><14.4><17.28><20.74><24.88>msbm10%
  }{}
\DeclareFontFamily{U}{msa}{}
\DeclareFontShape{U}{msa}{m}{n}{%
  <5><6><7><8><9>gen*msam%
  <10><10.95><12><14.4><17.28><20.74><24.88>msam10%
  }{}
\newtheorem{theorem}{Theorem}[section]
\newtheorem{lemma}[theorem]{Lemma}
\newtheorem{proposition}[theorem]{Proposition}
\theoremstyle{definition}
\newtheorem{definition}[theorem]{Definition}
\newtheorem{remark}[theorem]{Remark}
\numberwithin{equation}{section} \frenchspacing
\def\deg{\textrm{deg}}
\begin{document}

\title[Infinite order linear difference equation]
{Infinite order linear difference equation satisfied  by  a refinement of Goss zeta function}



\author{Su Hu}
\address{Department of Mathematics, South China University of Technology, Guangzhou, Guangdong 510640, China}
\email{mahusu@scut.edu.cn}

\author{Min-Soo Kim}
\address{Department of Mathematics Education, Kyungnam University, Changwon, Gyeongnam 51767, Republic of Korea}
\email{mskim@kyungnam.ac.kr}



\subjclass[2010]{11R59, 11M35, 11B68}
\keywords{function field, Goss zeta function, infinite order linear difference equation}

\begin{abstract}
At the international congress of mathematicians in 1900, Hilbert  claimed that the Riemann zeta function $\zeta(s)$
is not the solution of any algebraic ordinary differential equations on its region of analyticity. Let $T$ be
an infinite order linear differential operator  introduced by Van Gorder in 2015. Recently, Prado and Klinger-Logan \cite{PK} showed that the Hurwitz zeta function $\zeta(s,a)$ formally satisfies the following linear  differential
equation
$$
    T\left[\zeta (s,a) - \frac{1}{a^s}\right] = \frac{1}{(s-1)a^{s-1}}.
$$
 Then in \cite{HK2021}, by defining $T_{p}^{a}$, a $p$-adic analogue of Van Gorder's operator $T,$  we constructed the following convergent  infinite order linear differential   equation satisfied by the $p$-adic
 Hurwitz-type Euler zeta function $\zeta_{p,E}(s,a)$
$$
T_{p}^{a}\left[\zeta_{p,E}(s,a)-\langle a\rangle^{1-s}\right]
=\frac{1}{s-1}\left(\langle a-1 \rangle^{1-s}-\langle a\rangle^{1-s}\right).
$$
 
In this paper, we consider this problem in the positive characteristic case.
That is, by introducing  $\zeta_{\infty}(s_{0},s,a,n)$, a Hurwitz type refinement of Goss zeta function, and an infinite order linear difference operator $L$,
we establish  the following difference equation
\begin{equation*} L\left[\zeta_{\infty}\left(\frac{1}{T},s,a,0\right)\right]=\sum_{\gamma\in\mathbb{F}_{q}} \frac{1}{\langle a+\gamma\rangle^{s}}.
\end{equation*}
 
 \end{abstract}

\maketitle

\section{Introduction}
Let 
\begin{equation}\label{Riemann}
\zeta(s)=\sum_{n=1}^{\infty}\frac{1}{n^{s}},\quad\textrm{Re}(s) > 1
\end{equation}
be the Riemann zeta function. 
At the international congress of mathematicians in 1900, Hilbert \cite{HilbertProb} claimed that  $\zeta(s)$
is not the solution of any algebraic ordinary differential equations on its region of analyticity.
In 2015, Van Gorder \cite{VanGorder}  showed that $\zeta(s)$  formally satisfies an infinite order linear differential equation
\begin{equation}\label{zetaDE}
T[\zeta(s)-1]=\frac{1}{s-1}.
\end{equation}
For $0 < a \leq 1$, Re$(s) >1$, let
\begin{equation}~\label{Hurwitz}
\zeta(s,a)=\sum_{n=0}^{\infty}\frac{1}{(n+a)^{s}},
\end{equation}
be the Hurwitz zeta function  (see \cite{Hurwitz}).
This function can  be analytically continued to a meromorphic function in the
complex plane with a simple pole at $s=1$.
In 2020,  Prado and Klinger-Logan \cite{PK}
 showed that the Hurwitz zeta function $\zeta(s,a)$ also formally satisfies an infinite order linear differential equation
under the same operator $T$
\begin{equation}\label{HurDE1}
    T\left[\zeta (s,a) - \frac{1}{a^s}\right] = \frac{1}{(s-1)a^{s-1}}
\end{equation}
for $s \in \mathbb{C}$ satisfying $s + n \neq 1$ for all $n \in \mathbb{Z}_{\geq 0}.$  
But unfortunately, in the same paper they proved that
the operator $T$ applied to Hurwitz zeta function, does not converge at any point in the complex plane $\mathbb{C}$ (see \cite[Theorem 8]{PK}).

The Hurwitz-type Euler zeta function
\begin{equation*}\label{HEZ}
\zeta_E(s,a)=\sum_{n=0}^\infty\frac{(-1)^n}{(n+a)^s}.
\end{equation*}
is an alternating form of the Hurwitz zeta function $\zeta(s,a)$.
This  function can  be analytically
continued to the complex plane without any pole.
Let $\zeta_{p,E}(s,a)$ be its $p$-adic analogue,
which is defined from the integral transform
\begin{equation}\label{E-zeta}
\zeta_{p,E}(s,a)=\int_{\mathbb{Z}_{p}}\langle a+t\rangle^{1-s}d\mu_{-1}(t)
\end{equation}
for $a\in \mathbb{C}_{p}\backslash \mathbb{Z}_{p}$,
where \begin{equation}\label{-q-e2}
I_{-1}(f)=\int_{\mathbb Z_p}f(t)d\mu_{-1}(t)
=\lim_{r\rightarrow\infty}\sum_{k=0}^{p^r-1}f(k)(-1)^k.
\end{equation}
is the  fermionic $p$-adic integral for a continuous function $f$ on $\mathbb{Z}_{p}$. 
 For details  on  the definition and the properties of  $\zeta_{p,E}(s,a)$, we refer to \cite{KS}.

In a recent paper \cite{HK2021}, by introducing  an operator   $T_{p}^{a}$, a $p$-adic analogue of Van Gorder's operator $T$,  we showed that the $p$-adic Hurwitz-type Euler zeta function $\zeta_{p,E}(s,a)$ satisfies
an infinite order linear differential equation
\begin{equation}\label{mainbefore}
T_{p}^{a}\left[\zeta_{p,E}(s,a)-\langle a\rangle^{1-s}\right]
=\frac{1}{s-1}\left(\langle a-1 \rangle^{1-s}-\langle a\rangle^{1-s}\right)
\end{equation}
(see  \cite[Theorem 3.5]{HK2021}).
In contrast with the complex case,  we proved that the left hand side of the above equation is convergent everywhere for $s\in\mathbb{Z}_{p}$ with $s\neq 1$ and $a\in F$ with $|a|_{p}>1,$ where $F$ is any finite extension of
 $\mathbb{Q}_{p}$ with ramification index  over $\mathbb{Q}_{p}$  less than $p-1$ (see \cite[Corollary 3.8]{HK2021}).

In this paper, we consider this problem in the positive characteristic case by following the strategy of \cite{HK2021}.
An analogue of Riemann zeta function $\zeta(s)$ in the positive characteristic  is defined by Goss in 1979 \cite{Goss}, now known as the Goss zeta function.
 By introducing  $\zeta_{\infty}(s_{0},s,a,n)$, a Hurwitz type refinement of Goss zeta function (see (\ref{newzeta})), and an infinite order linear difference operator $L$ (see (\ref{operator})),
we prove  that 
\begin{equation}\label{main3} L\left[\zeta_{\infty}\left(\frac{1}{T},s,a,0\right)\right]=\sum_{\gamma\in\mathbb{F}_{q}} \frac{1}{\langle a+\gamma\rangle^{s}}.
\end{equation}
(see Theorem \ref{Main theorem}).
It may be viewed as an analogue of (\ref{HurDE1}) and (\ref{mainbefore}) in the positive characteristic setting.

Our paper will be organized as follows. In Section 2, we first  recall the definition of the Goss zeta function and introduce a Hurwitz type refinement  of it,
then we define an infinite order linear difference operator $L$. After these,
we shall state our main result (Theorem \ref{Main theorem}). In Section 3, we will investigate
the analytic properties of the function $\zeta_{\infty}(s_{0},s,a,n)$ (see Proposition \ref{analytic1}). In Section 4, 
after proving a shifted identity for $\zeta_{\infty}(s_{0},s,a,n)$ (Lemma \ref{Lemma1}), we will get the main result.
\section{Goss zeta function and its refinement}
In this section, we first recall the definition of the Goss zeta function, then introduce a Hurwitz type refinement  of it. 

First we state some notations. Let $q=p^{k}$ be a power of a prime number $p$. Let $R=\mathbb{F}_{q}[T]$ be the polynomial ring with one variable
over the finite field $\mathbb{F}_{q}$ and $K=\mathbb{F}_{q}(T)$ be the rational function field.
For a polynomial $a\in R$, $\textrm{deg} (a)$ denotes its degree.
Let $K_{\infty}=\mathbb{F}_{q}((\frac{1}{T}))$ be the completion of $K$ at the infinite place $\infty=\left(\frac{1}{T}\right)$
and $K_{\infty}^{*}=K_{\infty}\setminus\{0\}$ be the multiplicative group of the field $K_{\infty}$. Let $\mathbb{Z}_{p}$ be the ring of $p$-adic
integers. For 
an element $a\in K_{\infty}$, it has the expansion
\begin{equation}a=a_{m}T^{m}+a_{m-1}T^{m-1}+\cdots+a_{0}+a_{-1}T^{-1}+\cdots \end{equation}
with $a_{j}\in\mathbb{F}_{q}~ (j\leq m)$ and $a_{m}\neq 0.$ Let 
\begin{equation}\textrm{sgn}_{\infty}(a)=a_{m},~~
v_{\infty}(a)=-m \end{equation}
  and the absolute value $$|a|_{\infty}=\left(\frac{1}{e}\right)^{v_{\infty}(a)}.$$
Define \begin{equation}\label{bra} \langle a \rangle=a_{m}^{-1}T^{v_{\infty}(a)} a=1+a_{m}^{-1}a_{m-1}T^{-1}+\cdots \end{equation}
and 
\begin{equation}\label{omega} \omega_{\infty}(a)=\frac{a}{\langle a \rangle}=a_{m}T^{-v_{\infty}(a)}.\end{equation}
 $\omega_{\infty}$ is an analogue of  the Teichm\"uller character in the $p$-adic analysis. 
 Furthermore let $$\mathbb{S}=K_{\infty}^{*}\times\mathbb{Z}_{p},$$
which is an analogue of the complex plane $\mathbb{C}.$ 
The set $\mathbb{S}$  has endowed with a product topology from $K_{\infty}^{*}$ and $\mathbb{Z}_{p}$. (See \cite[Definition 2.1]{Goss}).

Let $R_{1}$ be the set of monic polynomials in $R,$ 
which is an analogue of the set of positive integers. For $a\in R_{1}, w=(s_{0},s)\in \mathbb{S}=K_{\infty}^{*}\times\mathbb{Z}_{p},$
define \begin{equation} a^{w}=s_{0}^{-v_{\infty}(a)}\langle a \rangle^{s},\end{equation}
which is an analogue of $n^{s}$ in the classical case.

From this, in 1979 Goss \cite{Goss} introduced an analogue of Riemann zeta function $\zeta(s)$ in the characteristic $p$ case. In fact, he considered 
the series 
\begin{equation}\label{Goss}
\zeta_{\infty}(s_{0}, s)=\sum_{l=0}^{\infty}\sum_{a\in R_{1}\atop \textrm{deg} a=l} a^{-w}=\sum_{l=0}^{\infty}s_{0}^{-l}\sum_{a\in R_{1}\atop \textrm{deg} a=l} \frac{1}{\langle a\rangle^{s}}\end{equation}
for $w=(s_{0},s)\in \mathbb{S}=K_{\infty}^{*}\times\mathbb{Z}_{p}$ and showed that it is an entire function on $\mathbb{S}$.
We will recall the concept of entirety for a function on $\mathbb{S}$ in Section 3 (see Definition \ref{Def 3.1}).

Inspired by the definition of Hurwitz zeta functions $\zeta(s,a)$, in the following we introduce a refinement of the Goss
zeta function.
Denote by \begin{equation}\label{set} \mathbb{A}=\{a\in K_{\infty}^{*} \mid |a|_{\infty}>1\}.\end{equation}
Let $\mathbb{F}_{q}[\frac{1}{T}]$ be the polynomial ring in $\frac{1}{T}$.
If $k\in \mathbb{F}_{q}[\frac{1}{T}]$, then $\deg_{\infty}(k)$ denotes its degree as a polynomial in $\frac{1}{T}$.
Now for a fixed  $(a,n)\in\mathbb{A}\times\mathbb{Z}$ with $v_{\infty}(a)=-m$, we define the zeta function 
\begin{equation}\label{newzeta}
\zeta_{\infty}(s_{0},s,a,n)=\sum_{l=0}^{\infty} s_{0}^{-(m+l+1)n}\sum_{\substack{k\in\mathbb{F}_{q}[\frac{1}{T}]\\\textrm{deg}_{\infty}(k)\leq l}}\frac{1}{\langle k+a\rangle^{s}}
\end{equation}
on the plane $(s_{0},s)\in\mathbb{S}=K_{\infty}^{*}\times\mathbb{Z}_{p}$.

In the above construction (\ref{newzeta}), since $a\in\mathbb{A}$, we have $|a|_{\infty}>1,$ and for $k\in\mathbb{F}_{q}[\frac{1}{T}]$, we have $|k|_{\infty}\leq 1$, which correspond to
$a\in\mathbb{C}_{p}$ with $|a|_{p}>1$, and $t\in\mathbb{Z}_{p}$ thus $|t|_{p}\leq 1$ in the definition of the $p$-adic Hurwitz zeta functions
in the characteristic zero case, respectively (see (\ref{E-zeta})). These lead the applications of the binomial theorem to derive the shifted identities (\ref{main1}) and \cite[(3.1)]{HK2021}) possible. 
It may be interesting to compare the proof of Lemma \ref{Lemma1} below and the proof of Lemma 3.1 in \cite{HK2021}, especially (\ref{4.6}) and \cite[(3.4)]{HK2021}.

In the following, we shall introduce an infinite order linear difference operator $L$ from the forward difference operator $\Delta$.

Let $\{a_{n}\}_{n=1}^{\infty}$ be a sequence. The forward difference operator $\Delta$ is defined by 
$$\Delta a_{n}=a_{n+1}-a_{n}.$$
and the higher order differences may be defined recursively by
$$\Delta^{h}a_{n}=\Delta^{h-1}a_{n+1}-\Delta^{h-1}a_{n}.$$
Thus $$\Delta^{h}a_{n}=\sum_{l=0}^{h}(-1)^{l}\binom{h}{l}a_{n+h-l}$$
for $h=1,2,3,\ldots$ 
and a formula for the $(n+j)$th term is given by
\begin{equation}\label{difference}
a_{n+j}=(1+\Delta)^{j}a_{n}:=\sum_{h=0}^{j}\binom{j}{h}\Delta^{h}a_{n}\end{equation}
(see \cite[p. 10]{SP}). 

Now given a function \begin{equation}
\begin{aligned}
f: \mathbb{Z}_{p}\times \mathbb{Z} &\rightarrow K_{\infty}\\
(s,n)&\mapsto f(s,n),
\end{aligned}
\end{equation}
 define a forward difference operator $\Delta_{(s,n)}$ by
\begin{equation}\label{delta}\Delta_{(s,n)}f(s,n)=f(s+1,n+1)-f(s,n).\end{equation}
Then from (\ref{difference}) we have
\begin{equation}
f(s+j,n+j)=(1+\Delta_{(s,n)})^{j}f(s,n)
\end{equation}
for $j\in\mathbb{N}$.
With the above forward difference operator $\Delta_{(s,n)}$, we introduce an infinite order linear difference operator $L$ by
\begin{equation}\label{operator} L:= id+\sum_{\substack{j=1\\(q-1)\mid j}}^{\infty}\binom{-s}{j}(1+\Delta_{(s,n)})^{j}.\end{equation}
Now we are at the position to state our main result.
\begin{theorem}\label{Main theorem}
For $a\in\mathbb{A}$ and $s\in\mathbb{Z}_{p}$,  $\zeta_{\infty}(s_{0},s,a,n)$  satisfies the following infinite order linear difference equation 
\begin{equation}\label{main} 
L\left[\zeta_{\infty}\left(\frac{1}{T},s,a,0\right)\right]=\sum_{\gamma\in\mathbb{F}_{q}} \frac{1}{\langle a+\gamma\rangle^{s}}.
\end{equation}
\end{theorem}

\section{The entirety of  $\zeta_{\infty}(s_{0},s,a,n)$}
In this section, to investigate the analytic properties of  $\zeta_{\infty}(s_{0},s,a,n)$,
we first recall the concept of entirety for a function on $\mathbb{S}=K_{\infty}^{*}\times\mathbb{Z}_{p}$ according to Goss's book \cite[p. 248--249]{Gossbook}.
\begin{definition}[{see \cite[p. 248, Definition 8.5.1]{Gossbook}}] \label{Def 3.1}
We define an entire function $f(w)=f(s_{0},s)$ on $\mathbb{S}$ to be a continuous family of $K_{\infty}$-valued entire power series in $s_{0}^{-1}$ 
which is parametrized by $\mathbb{Z}_{p}$. Moreover, this family is required to be uniformly convergent on bounded subsets of $K_{\infty}$.
\end{definition}
The above definition means that a function $f(w)=f(s_{0},s): \mathbb{S}\rightarrow K_{\infty}$ is said to be entire if and only if  
we have the following power series expansion
$$f(w)=f(s_{0},s)=\sum_{l=0}^{\infty}f_{l}(s)s_{0}^{-l},$$
where $f_{l}(s): \mathbb{Z}_{p}\rightarrow K_{\infty}$ is continuous, and the above series is uniformly convergent for $s_{0}$ in any bounded subset of $K_{\infty}$.

In Proposition \ref{analytic1} below, we will consider the entirety of  $\zeta_{\infty}(s_{0},s,a,n)$. First, we need the following lemma.
\begin{lemma}\label{analytic}
For a fixed $\alpha\in K_{\infty}^{*}$, $f(s)=\langle \alpha \rangle^{s}$ is a continuous function from $\mathbb{Z}_{p}$ to $K_{\infty}$.
\end{lemma}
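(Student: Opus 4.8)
The plan is to represent $f(s)=\langle\alpha\rangle^{s}$ by its binomial series and to read off both convergence and analyticity from the single fact that $\langle\alpha\rangle$ is a one-unit of $k_{\infty}$. First I would observe that, by the defining formula (\ref{bra}), we have $\langle\alpha\rangle=1+a_{m}^{-1}a_{m-1}T^{-1}+\cdots$, so that writing $\langle\alpha\rangle=1+x$ gives $v_{\infty}(x)\ge 1$ and hence $|x|_{\infty}<1$. For $s\in\mathbb{Z}_{p}$ I would then set
\begin{equation*}
f(s)=\langle\alpha\rangle^{s}=\sum_{n=0}^{\infty}\binom{s}{n}x^{n},
\end{equation*}
the generalized binomial coefficient $\binom{s}{n}\in\mathbb{Z}_{p}$ being reduced modulo $p$ so as to be read inside $\mathbb{F}_{p}\subseteq\mathbb{F}_{q}\subseteq k_{\infty}$ before multiplication by $x^{n}$.

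Second, I would check convergence and well-definedness. The reduced coefficient lies in $\mathbb{F}_{p}$, so $|\binom{s}{n}|_{\infty}\le 1$ and the $n$-th term satisfies $|\binom{s}{n}x^{n}|_{\infty}\le|x|_{\infty}^{\,n}$. Since $|x|_{\infty}<1$ this bound tends to $0$ and is uniform in $s$; as $k_{\infty}$ is complete the series converges for every $s\in\mathbb{Z}_{p}$ to an element $f(s)\in k_{\infty}$, which (being a limit of partial sums of the form $1+(\text{terms of positive }v_{\infty})$) is again a one-unit and in particular lies in $k_{\infty}^{*}$, the first factor of $\mathbb{S}=k_{\infty}^{*}\times\mathbb{Z}_{p}$.

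Third, I would deduce analyticity. The series above is precisely an everywhere-convergent binomial expansion $\sum_{n\ge 0}c_{n}\binom{s}{n}$ with coefficients $c_{n}=x^{n}\in k_{\infty}$ satisfying $|c_{n}|_{\infty}=|x|_{\infty}^{\,n}\to 0$; this is exactly the shape required of an analytic function of $s\in\mathbb{Z}_{p}$ with values in $k_{\infty}$, and the uniform estimate from the previous step also yields continuity with respect to $|\cdot|_{\mathbb{S}}$. Thus $f$ is analytic into $\mathbb{S}$, as claimed.

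The step I expect to be the main obstacle is the cross-characteristic bookkeeping that legitimizes the binomial formula: one must interpret $\binom{s}{n}\in\mathbb{Z}_{p}$ inside the characteristic-$p$ field $k_{\infty}$ and confirm that the resulting $f$ is the correct extension of ordinary integer exponentiation. This is where the one-unit hypothesis is essential, since it forces $|x|_{\infty}<1$ and hence $c_{n}=x^{n}\to 0$, and where the Frobenius identity $(1+x)^{p^{r}}=1+x^{p^{r}}$ enters: the relation $\langle\alpha\rangle^{p^{r}}=1+x^{p^{r}}\to 1$ is what makes $j\mapsto\langle\alpha\rangle^{j}$ extend continuously from $\mathbb{Z}$ to $\mathbb{Z}_{p}$ and identifies $f$ as the unique such extension.
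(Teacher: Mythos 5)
Your proof is correct and follows essentially the same route as the paper: expand $\langle\alpha\rangle^{s}=\sum_{j\ge 0}\binom{s}{j}\lambda_{\alpha}^{j}$ with $|\lambda_{\alpha}|_{\infty}<1$, bound the terms using $\bigl|\binom{s}{j}\bigr|\le 1$, and conclude uniform convergence (Weierstrass test) and hence analyticity from the analyticity of each $\binom{s}{j}$. The only difference is bookkeeping: you reduce $\binom{s}{n}$ modulo $p$ into $\mathbb{F}_{p}\subseteq k_{\infty}$ and estimate with $|\cdot|_{\infty}$, whereas the paper keeps $\binom{s}{j}$ as a $p$-adic integer and estimates with the hybrid absolute value $|\cdot|_{\mathbb{S}}$ — your version is if anything the more careful reading of the cross-characteristic product.
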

\begin{proof} For $\alpha\in K_{\infty}^{*}$ we have $$\langle \alpha \rangle=1+\lambda_{\alpha}$$
with $\left|\lambda_{\alpha}\right|_{\infty}< 1.$
Then by \cite[p. 237, Definition 8.1.2]{Gossbook},  for $s\in\mathbb{Z}_{p}$
 \begin{equation}\label{expa}
\langle \alpha \rangle^{s}=\sum_{j=0}^{\infty}\binom{s}{j}\lambda_{\alpha}^{j}.\end{equation}
Here
\begin{equation*}
\binom{s}{j}=\frac{s(s-1)\cdots (s-j+1)}{j!}
\end{equation*}
gives a continuous function from $\mathbb{Z}_{p}$ to $\mathbb{Z}_{p}$.
By reducing it modulo $(p)$, we can consider it as a continuous function with values in $\mathbb{F}_{p}\subset\mathbb{F}_{q}\subset K_{\infty}$
(see \cite[p. 245, Definition 8.4.2]{Gossbook}).
So 
\begin{equation}\label{bino}
\left|\binom{s}{j}\right|_{\infty}\leq 1
\end{equation} and 
 \begin{equation}
 \left|\langle \alpha \rangle^{s}\right|_{\infty}\leq \sum_{j=0}^{\infty}\left|\lambda_{\alpha}\right|_{\infty}^{j}.
   \end{equation}
Since for a fixed  $\langle \alpha \rangle=1+\lambda_{\alpha}$ with $\left|\lambda_{\alpha}\right|_{\infty}<1$,
from the Weierstrass test, the power series (\ref{expa}) is uniformly convergent  for $s\in\mathbb{Z}_{p}$.
By \cite[p. 182, Theorem 3.2]{Lang2},
$f(s)=\langle \alpha \rangle^{s}$  is a continuous function from $\mathbb{Z}_{p}$ to $K_{\infty}$.
   \end{proof}
\begin{proposition}[{The entirety of  $\zeta_{\infty}(s_{0},s,a,n)$}]\label{analytic1}
Fixed $(a,n)\in\mathbb{A}\times\mathbb{Z}$, the function $\zeta_{\infty}(s_{0},s,a,n)$ is  an entire function for $(s_{0},s)\in\mathbb{S}=K_{\infty}^{*}\times\mathbb{Z}_{p}$.
\end{proposition}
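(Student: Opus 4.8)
The plan is to write
$\zeta_{\infty}(s_{0},s,a,z)=\sum_{l\ge 0}t_{l}$ with $t_{l}=s_{0}^{-(m+l+1)z}\,c_{l}(s)$ and $c_{l}(s)=\sum_{\deg_{\infty}(k)\le l}\langle k+a\rangle^{-s}$, and to prove two independent facts: each coefficient $c_{l}(s)$ is analytic in $s$, and the $c_{l}(s)$ tend to $0$ so rapidly that the series is an entire power series in the variable $s_{0}$. The first fact is the easy one: since $|a|_{\infty}>1$ and $|k|_{\infty}\le 1$, the leading term of $k+a$ is that of $a$, so $v_{\infty}(k+a)=-m$ and $\mathrm{sgn}_{\infty}(k+a)=a_{m}$; in particular $k+a\in k_{\infty}^{*}$, and each summand $\langle k+a\rangle^{-s}$ is analytic in $s$ by Lemma~\ref{analytic}. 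As $c_{l}(s)$ is a finite ($q^{l+1}$-term) sum of such functions, it is analytic in $s$.

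The crux is a quantitative decay estimate for $c_{l}(s)$. Writing $\lambda_{k}:=\langle k+a\rangle-1=(\langle a\rangle-1)+a_{m}^{-1}\sum_{i=0}^{l}c_{i}T^{-m-i}$ for $k=\sum_{i=0}^{l}c_{i}T^{-i}$, the expansion (\ref{expa}) gives $c_{l}(s)=\sum_{j\ge 0}\binom{-s}{j}\sum_{\deg_{\infty}(k)\le l}\lambda_{k}^{\,j}$. Here $\lambda_{k}$ is affine-linear in $(c_{0},\dots,c_{l})\in\mathbb{F}_{q}^{\,l+1}$; expanding $\lambda_{k}^{\,j}$ multinomially and summing each monomial $\prod_{i}c_{i}^{e_{i}}$ over $\mathbb{F}_{q}^{\,l+1}$, the standard identity $\sum_{c\in\mathbb{F}_{q}}c^{e}=0$ unless $e\ge 1$ and $(q-1)\mid e$ forces every exponent $e_{i}\ge q-1$ in a surviving term. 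Hence $\sum_{\deg_{\infty}(k)\le l}\lambda_{k}^{\,j}=0$ for $j<(q-1)(l+1)$, and for the surviving $j$ each factor $c_{i}^{e_{i}}$ carries a $T^{-(m+i)e_{i}}$, so every nonzero contribution has $\infty$-valuation at least $(q-1)\sum_{i=0}^{l}(m+i)=:V_{l}=(q-1)\big[(l+1)m+\tfrac{l(l+1)}{2}\big]$. Combining this with the crude bound $|\lambda_{k}|_{\infty}\le 1/e$ (which secures convergence of the sum over $j$) and with $|\binom{-s}{j}|_{p}\le 1$, one obtains, via (\ref{absoluteS}), an estimate of the form $|c_{l}(s)|_{\mathbb{S}}\le P(l)\,e^{-V_{l}}$ with $P$ a fixed polynomial, uniformly in $s$.

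Finally I would assemble the pieces. Since $V_{l}$ grows quadratically in $l$ while the exponent $(m+l+1)z$ grows only linearly, I regard $\zeta_{\infty}(s_{0},s,a,z)$ as a power series in $1/s_{0}$ (when $z>0$) or in $s_{0}$ (when $z<0$): the coefficient sitting at degree $N=(m+l+1)|z|$ satisfies $|c_{l}(s)|_{\mathbb{S}}^{1/N}\le(P(l)e^{-V_{l}})^{1/N}\to 0$ because $V_{l}/N\to\infty$, so the radius of convergence is infinite; when $z=0$ the function reduces to the bare series $\sum_{l}c_{l}(s)$, which converges since $V_{l}\to\infty$. In all cases the series converges uniformly on bounded subsets of $\mathbb{S}$ and, each coefficient being analytic in $s$ by Lemma~\ref{analytic}, the sum $\zeta_{\infty}(s_{0},s,a,z)$ is entire on $\mathbb{S}$. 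The main obstacle is exactly the decay estimate of the second paragraph: one must extract the full, super-exponential (quadratic-in-$l$) gain in $\infty$-valuation produced jointly by the finite-field power-sum cancellation and the growing $T$-powers $T^{-(m+i)e_{i}}$, since the naive bound $|\langle k+a\rangle^{-s}|_{\mathbb{S}}\le e/(e-1)$ on a single summand is far too weak to counter the $q^{l+1}$ terms in $c_{l}(s)$.
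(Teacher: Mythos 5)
Your argument is correct and is essentially the paper's own proof: the same decomposition of $\langle k+a\rangle$ into a fixed $1$-unit plus a term ranging over an $\mathbb{F}_q$-space, the same power-sum cancellation $\sum_{c\in\mathbb{F}_q}c^{e}=0$ unless $e\ge 1$ and $(q-1)\mid e$ forcing valuation at least $(q-1)\sum_{j=0}^{l}(m+j)$, quadratic in $l$, and the same comparison against the only-linear growth of $\bigl|s_{0}^{-(m+l+1)z}\bigr|_{\infty}$. The only cosmetic differences are that you apply the cancellation inside the binomial series $\sum_{j}\binom{-s}{j}\lambda_{k}^{j}$ directly for $p$-adic $s$ where the paper first treats integer exponents and then invokes density of $\mathbb{N}$ in $\mathbb{Z}_{p}$, and your polynomial factor $P(l)$ is superfluous since the ultrametric inequality gives the bound $e^{-V_{l}}$ outright.
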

\begin{proof}
For $k\in\mathbb{F}_{q}[\frac{1}{T}]$ with $ \textrm{deg}_{\infty}(k)\leq l$,
we have the expansion $$k=b_{-l}T^{-l}+b_{-l+1}T^{-l+1}+\cdots+b_{-1}T^{-1}+b_{0}$$
with $b_{j}\in\mathbb{F}_{q}~ (-l\leq j\leq 0).$
And for $a\in\mathbb{A}$ so $v_{\infty}(a)=-m< 0$ (see (\ref{set})), we have the expansion 
$$a=a_{m}T^{m}+a_{m-1}T^{m-1}+\cdots+a_{0}+a_{-1}T^{-1}+\cdots $$
with $a_{j}\in\mathbb{F}_{q}~ (j\leq m)$ and $a_{m}\neq 0.$
Thus by (\ref{bra}) we have
\begin{equation}\label{quan1}
\begin{aligned}
\langle k+a \rangle&=a_{m}^{-1}b_{-l}T^{-m-l}+a_{m}^{-1}b_{-l+1}T^{-m-l+1}+\cdots+a_{m}^{-1}b_{0}T^{-m}\\
&\quad+1+a_{m}^{-1}a_{m-1}T^{-1}+\cdots+a_{m}^{-1}a_{0}T^{-m}+a_{m}^{-1}a_{-1}T^{-m-1}+\cdots\\
&=(1+a_{m}^{-1}a_{m-1}T^{-1}+\cdots+a_{m}^{-1}a_{0}T^{-m}+a_{m}^{-1}a_{-1}T^{-m-1}+\cdots)\\
&\quad+(a_{m}^{-1}b_{-l}T^{-m-l}+a_{m}^{-1}b_{-l+1}T^{-m-l+1}+\cdots+a_{m}^{-1}b_{0}T^{-m}).
\end{aligned}
\end{equation}
Let $$x=1+a_{m}^{-1}a_{m-1}T^{-1}+\cdots+a_{m}^{-1}a_{0}T^{-m}+a_{m}^{-1}a_{-1}T^{-m-1}+\cdots$$
and $$w=a_{m}^{-1}b_{-l}T^{-m-l}+a_{m}^{-1}b_{-l+1}T^{-m-l+1}+\cdots+a_{m}^{-1}b_{0}T^{-m}.$$
So by (\ref{quan1}) we have
\begin{equation}\label{quan2}
\langle k+a \rangle=x+w.
\end{equation}
Note that if $k$ varies over the set
\begin{equation}\label{dis}
C=\left\{k\in\mathbb{F}_{q}[\frac{1}{T}]\mid \textrm{deg}_{\infty}(k)\leq l\right\},
\end{equation}
 then $w$ will run through the space
\begin{equation}\label{dis1}
W=\left\{\beta_{l}T^{-m-l}+\beta_{l-1}T^{-m-l+1}+\cdots+\beta_{0}T^{-m}\mid \beta_{0}, \beta_{1}, \ldots, \beta_{l}\in\mathbb{F}_{q} \right\}.
\end{equation}
 Since  the function 
\begin{equation} 
\begin{aligned} 
\mathbb{Z}_{p}&\to K_{\infty}\\
                                                     j&\mapsto \langle k+a\rangle^{j}
\end{aligned}
\end{equation}
is continuous and the natural number system $\mathbb{N}$ is dense in $\mathbb{Z}_{p}$,
we only need to consider the case for $j\in\mathbb{N}$.
By (\ref{quan2}), for $j\in\mathbb{N}$ we have 
\begin{equation}\label{quan3}
\begin{aligned}
\sum_{\substack{k\in\mathbb{F}_{q}[\frac{1}{T}]\\ \textrm{deg}_{\infty}(k)\leq l}}\langle k+a\rangle^{j}
&=\sum_{w\in W}(x+w)^{j}\\
&=\sum_{\beta_{0}, \beta_{1}, \cdots, \beta_{l}\in\mathbb{F}_{q}}(x+\beta_{l}T^{-m-l}+\beta_{l-1}T^{-m-l+1}+\cdots+\beta_{0}T^{-m})^{j}\\
&=\sum_{\substack{0\leq j_{-1}, j_{0},j_{1},\ldots,j_{l}\leq j\\j_{-1}+ j_{0}+j_{1}+\cdots+j_{l}=j}}\binom{j}{j_{-1}, j_{0},j_{1},\ldots,j_{l}}x^{j_{-1}}\\
&\quad\times\sum_{\beta_{l}\in\mathbb{F}_{q}}(\beta_{l}T^{-m-l})^{j_l}\sum_{\beta_{l-1}\in\mathbb{F}_{q}}(\beta_{l-1}T^{-m-l+1})^{j_{l-1}}\cdots\sum_{\beta_{0}\in\mathbb{F}_{q}}(\beta_{0}T^{-m})^{j_0}\\
&=\sum_{\substack{0\leq j_{-1}, j_{0},j_{1},\ldots,j_{l}\leq j\\j_{-1}+ j_{0}+j_{1}+\cdots+j_{l}=j}}\binom{j}{j_{-1}, j_{0},j_{1},\ldots,j_{l}}x^{j_{-1}}\\
&\quad\times\left(\sum_{\beta_{l}\in\mathbb{F}_{q}}\beta_{l}^{j_{l}}\right)T^{(-m-l)j_{l}}\left(\sum_{\beta_{l-1}\in\mathbb{F}_{q}}\beta_{l-1}^{j_{l-1}}\right)T^{(-m-l+1)j_{l-1}}\cdots\left(\sum_{\beta_{0}\in\mathbb{F}_{q}}\beta_{0}^{j_{0}}\right)T^{(-m)j_{0}},
\end{aligned}
\end{equation}
where $$\binom{j}{j_{-1}, j_{0},j_{1},\ldots,j_{l}}=\frac{j!}{j_{-1}!j_{0}!\cdots j_{l}!}$$
and $0^{0}=1$ by convention.

For $j_{h}<(q-1)$, we have $$\left(\sum_{\beta_{j}\in\mathbb{F}_{q}}\beta_{j}^{j_{h}}\right)T^{(-m-j)j_{h}}=0.$$
For $j_{h}\geq q-1$, we have
\begin{equation}\label{quan4}
\begin{aligned}
v_{\infty}\left(\left(\sum_{\beta_{j}\in\mathbb{F}_{q}}\beta_{j}^{j_{h}}\right)T^{(-m-j)j_{h}}\right)&\geq v_{\infty}\left(T^{(-m-j)j_{h}}\right)\\
&=(m+j)j_{h}\\&\geq (q-1)(m+j)\end{aligned}
\end{equation}
and \begin{equation}\label{quan4add}
\left|\left(\sum_{\beta_{j}\in\mathbb{F}_{q}}\beta_{j}^{j_{h}}\right)T^{(-m-j)j_{h}}\right|_{\infty}
\leq \left(\frac{1}{e}\right)^{(q-1)(m+j)}.
\end{equation}
Since the valuations
$$\left|\binom{j}{j_{-1}, j_{0},j_{1},\ldots,j_{l}}\right|_{\infty}\leq 1 ~~\textrm{and}~~\left|x^{j_{-1}}\right|_{\infty}=1,$$
by  (\ref{quan3}) and (\ref{quan4add}) we have
\begin{equation}\label{quan5}
\begin{aligned}
\left|\sum_{\substack{k\in\mathbb{F}_{q}[\frac{1}{T}]\\ \textrm{deg}_{\infty}(k)\leq l}}\langle k+a\rangle^{j}\right|_{\infty}&\leq \left(\frac{1}{e}\right)^{(q-1)\sum_{j=0}^{l}(m+j)}\\
&=\left(\frac{1}{e}\right)^{(q-1)\frac{(2m+l)(l+1)}{2}}.
\end{aligned}
\end{equation}
Thus for any $s\in\mathbb{Z}_{p}$ 
\begin{equation}\label{quan6}
 \left|\sum_{\substack{k\in\mathbb{F}_{q}[\frac{1}{T}]\\\textrm{deg}_{\infty}(k)\leq l}}\frac{1}{\langle k+a\rangle^{s}}\right|_{\infty}
\leq\left(\frac{1}{e}\right)^{(q-1)\frac{(2m+l)(l+1)}{2}}.
\end{equation}
Notice that \begin{equation}\label{quan7}
 v_{\infty}(s_{0}^{-(m+l+1)n})=-(m+l+1)nv_{\infty}(s_{0})\end{equation}
and \begin{equation}\label{quan8}
\left|s_{0}^{-(m+l+1)n}\right|_{\infty}=\left(\frac{1}{e}\right)^{-(m+l+1)n v_{\infty}(s_{0})}.\end{equation}
Combing (\ref{quan6}) and (\ref{quan8}) we have
\begin{equation}
\left|s_{0}^{-(m+l+1)n}\sum_{\substack{k\in\mathbb{F}_{q}[\frac{1}{T}]\\ \textrm{deg}_{\infty}(k)\leq l}}\frac{1}{\langle k+a\rangle^{s}}\right|_{\infty}
\leq \left(\frac{1}{e}\right)^{(q-1)\frac{(2m+l)(l+1)}{2}-(m+l+1)nv_{\infty}(s_{0})}.
\end{equation}
So 
\begin{equation}
\lim_{l\to\infty}\left|s_{0}^{-(m+l+1)n}\sum_{\substack{k\in\mathbb{F}_{q}[\frac{1}{T}]\\ \textrm{deg}_{\infty}(k)\leq l}}\frac{1}{\langle k+a\rangle^{s}}\right|_{\infty}=0
\end{equation}
uniformly  for $s_{0}$ in any bounded subset of $K_{\infty}$.
Thus for any fixed $(a,n)\in\mathbb{A}\times\mathbb{Z}$ with $v_{\infty}(a)=-m$, the series 
\begin{equation} 
\zeta_{\infty}(s_{0},s,a,n)=\sum_{l=0}^{\infty} s_{0}^{-(m+l+1)n}\sum_{\substack{k\in\mathbb{F}_{q}[\frac{1}{T}]\\\textrm{deg}_{\infty}(k)\leq l}}\frac{1}{\langle k+a\rangle^{s}}
\end{equation} is uniformly convergent for $s_{0}$ in any bounded subset of $K_{\infty}$.
So by Definition \ref{Def 3.1} and  Lemma \ref{analytic}, we see that $\zeta_{\infty}(s_{0},s,a,n)$ is  an entire function for $(s_{0},s)\in\mathbb{S}=K_{\infty}^{*}\times\mathbb{Z}_{p}$.
 \end{proof}

\section{Proof of the main result}
In this section, we shall prove Theorem \ref{Main theorem}, which is implied by the following shifted identity for $\zeta_{\infty}(s_{0},s,a,n)$.

\begin{lemma}\label{Lemma1}
For $a\in\mathbb{A}$ and $s\in\mathbb{Z}_{p}$,  $\zeta_{\infty}(s_{0},s,a,n)$  satisfies the following identity
\begin{equation}\label{main1}\sum_{\gamma\in\mathbb{F}_{q}} \frac{1}{\langle a+\gamma\rangle^{s}}=\zeta_{\infty}\left(\frac{1}{T},s,a,0\right)+\sum_{\substack{j=1\\(q-1)\mid j}}^{\infty}\binom{-s}{j}\zeta_{\infty}\left(\frac{1}{T},s+j,a,j\right).
\end{equation}
\end{lemma}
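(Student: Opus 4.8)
The plan is to reduce everything to the ``layer sums'' $S_{l}(s)=\sum_{k\in C_{l}}\langle k+a\rangle^{-s}$, where $C_{l}$ denotes the set in (\ref{dis}), and then to extract a one-step recursion in $l$ that reassembles the shifted zeta values after summation. First I would note that taking $z=0$ makes the prefactor $s_{0}^{-(m+l+1)\cdot 0}$ trivial, so that $\zeta_{\infty}(\tfrac1T,s,a,0)=\sum_{l=0}^{\infty}S_{l}(s)$, whereas the elements $\alpha\in\mathbb{F}_{q}$ are exactly the $k\in C_{0}$, giving $\sum_{\alpha\in\mathbb{F}_{q}}\langle a+\alpha\rangle^{-s}=S_{0}(s)$. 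Hence (\ref{main1}) is equivalent to the assertion that the tail $\sum_{l\geq 1}S_{l}(s)$ is accounted for by the second sum on the right. The structural fact I would use repeatedly is the decomposition $\langle k+a\rangle=\langle a\rangle+a_{m}^{-1}T^{-m}k$ recorded in (\ref{quan2}), which holds because $|k|_{\infty}\leq 1<|a|_{\infty}$ forces $\mathrm{sgn}_{\infty}(k+a)=a_{m}$ and $v_{\infty}(k+a)=-m$.

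The heart of the proof is a recursion obtained by peeling off the highest $\tfrac1T$-coefficient. Writing each $k\in C_{l}$ uniquely as $k=k'+bT^{-l}$ with $k'\in C_{l-1}$ and $b\in\mathbb{F}_{q}$, the decomposition above gives $\langle k+a\rangle=\langle k'+a\rangle+a_{m}^{-1}bT^{-(m+l)}$, where the new term has absolute value $(1/e)^{m+l}<1$. Since both $\langle k'+a\rangle$ and $1+a_{m}^{-1}bT^{-(m+l)}\langle k'+a\rangle^{-1}$ are one-units and exponentiation by $-s$ is multiplicative on one-units, I may write $\langle k+a\rangle^{-s}=\langle k'+a\rangle^{-s}\bigl(1+a_{m}^{-1}bT^{-(m+l)}\langle k'+a\rangle^{-1}\bigr)^{-s}$ and expand the second factor by the binomial series (\ref{expa}). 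Summing the resulting finite family over $b\in\mathbb{F}_{q}$ and using the orthogonality
\[
\sum_{b\in\mathbb{F}_{q}}b^{i}=\begin{cases}-1,& i\geq 1\text{ and }(q-1)\mid i,\\ 0,&\text{otherwise}\end{cases}
\]
(the $i=0$ term vanishing because $|\mathbb{F}_{q}|=q\equiv 0$ in characteristic $p$) kills every index with $(q-1)\nmid i$ and leaves
\[
S_{l}(s)=-\sum_{\substack{i\geq 1\\(q-1)\mid i}}\binom{-s}{i}\,a_{m}^{-i}\,T^{-(m+l)i}\,S_{l-1}(s+i),\qquad l\geq 1.
\]

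Two remarks then finish the argument. First, $a_{m}\in\mathbb{F}_{q}^{*}$ gives $a_{m}^{q-1}=1$, so $a_{m}^{-i}=1$ for every surviving index $(q-1)\mid i$; the dependence on $\mathrm{sgn}_{\infty}(a)$ disappears, as it must, since the right side of (\ref{main1}) does not see it. Second, summing the recursion over $l\geq 1$, interchanging the order of the $i$- and $l$-summations, and reindexing $l\mapsto l-1$ assembles, for each fixed $i$, the defining series (\ref{newzeta}) of $\zeta_{\infty}(\tfrac1T,s+i,a,i)$. This produces $\sum_{l\geq 1}S_{l}(s)=-\sum_{(q-1)\mid i}\binom{-s}{i}\zeta_{\infty}(\tfrac1T,s+i,a,i)$, which is (\ref{main1}) once we recall $S_{0}(s)=\sum_{\alpha\in\mathbb{F}_{q}}\langle a+\alpha\rangle^{-s}$ and $\zeta_{\infty}(\tfrac1T,s,a,0)=\sum_{l\geq 0}S_{l}(s)$.

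The point that needs real care is the legitimacy of expanding term by term and of interchanging the doubly infinite summations. I would control this with the estimate already established for Proposition \ref{analytic1}: by (\ref{quan6}) one has $\lvert S_{l-1}(s+i)\rvert_{\mathbb{S}}\leq(1/e)^{(q-1)(2m+l-1)l/2}$, and combined with $\lvert\binom{-s}{i}\rvert_{p}\leq 1$ and $\lvert T^{-(m+l)i}\rvert_{\infty}=(1/e)^{(m+l)i}$ this bounds the general term of the double series by a quantity that decays quadratically in $l$ and geometrically in $i$, hence is summable. The resulting uniform convergence in the $\mathbb{S}$-topology justifies the rearrangements, while the passage from integer exponents to arbitrary $s\in\mathbb{Z}_{p}$ is handled, exactly as in Proposition \ref{analytic1}, by the density of $\mathbb{N}$ in $\mathbb{Z}_{p}$ together with continuity.
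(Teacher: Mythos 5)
Your argument is correct and is essentially the paper's own proof in different packaging: the one-step recursion $S_{l}(s)=-\sum_{(q-1)\mid i}\binom{-s}{i}T^{-(m+l)i}S_{l-1}(s+i)$ that you isolate is exactly the identity the paper derives inside its telescoped truncation (same splitting-off of the top coefficient of $k$, same binomial expansion of a one-unit, same character sum over $\mathbb{F}_{q}$), and your absolute-convergence bound for the double series plays the role of the paper's Proposition \ref{proposition-add}. The only difference is bookkeeping: you sum the recursion over $l\geq 1$ directly, while the paper telescopes a finite truncation at level $n$ and then lets $n\to\infty$.
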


\begin{remark}
This is a positive characteristic analogue of two known identities in the characteristic zero case. One is  the complex identity for Hurwitz zeta function $\zeta(s,a)$ (see \cite[Lemma 1]{PK}), the other is the $p$-adic identity for  $p$-adic Hurwitz-type Euler zeta function $\zeta_{p,E}(s,a)$ (see \cite[Lemma 3.1]{HK2021}).
\end{remark}
\begin{remark} It may be interesting to compare  (\ref{main1}) with the following recurrence relation for the special values of the Goss zeta function
$$\zeta_{\infty}(-N)=1-\sum_{\substack{j=0 \\ (q-1)\mid (N-j)}}^{N-1}\binom{N}{j}T^{j}\zeta_{\infty}(-j)$$
for $N\in\mathbb{N}$ (see \cite[Theorem 5.6]{Goss}).
\end{remark}

\begin{proof}[Proof of Lemma \ref{Lemma1}.]
For  $s\in\mathbb{Z}_{p}$ and $N\in\mathbb{N}$ we have
 \begin{equation}\label{star}
\sum_{\gamma\in\mathbb{F}_{q}} \frac{1}{\langle a+\gamma\rangle^{s}}=\sum_{l=0}^{N}\sum_{\substack{k\in\mathbb{F}_{q}[\frac{1}{T}]\\ \textrm{deg}_{\infty}(k)\leq l}} \frac{1}{\langle k+a \rangle^{s}}-\sum_{l=1}^{N}\sum_{\substack{k\in\mathbb{F}_{q}[\frac{1}{T}]\\ \textrm{deg}_{\infty}(k)\leq l}} \frac{1}{\langle k+a \rangle^{s}}.  \end{equation}
Denote by \begin{equation}
\widetilde{W}=\left\{\beta_{l-1}T^{-m-l+1}+\cdots+\beta_{0}T^{-m}\mid \beta_{0}, \beta_{1}, \ldots, \beta_{l-1}\in\mathbb{F}_{q} \right\}.
\end{equation} 
Obviously, $$\dim_{\mathbb{F}_{q}}\widetilde{W}=\dim_{\mathbb{F}_{q}}W-1=(l+1)-1=l.$$
By Eqs. (\ref{quan2}), (\ref{dis}) and (\ref{dis1}) we have
\begin{equation}\label{lquan1}
 \begin{aligned}
 \sum_{\substack{k\in\mathbb{F}_{q}[\frac{1}{T}]\\ \textrm{deg}_{\infty}(k)\leq l}} \frac{1}{\langle k+a \rangle^{s}}=\sum_{\substack{w\in W\\\dim_{\mathbb{F}_{q}}W=l+1}}\frac{1}{(x+w)^{s}}
 =\sum_{\substack{\tilde{w}\in \widetilde{W}\\\dim_{\mathbb{F}_{q}}\widetilde{W}=l}}\sum_{\beta_{l}\in\mathbb{F}_{q}}\frac{1}{(x+\tilde{w}+\beta_{l}T^{-m-l})^{s}}.  \end{aligned}
 \end{equation}
So (\ref{star}) implies that
 \begin{equation}\label{lquan2}
 \begin{aligned}
\sum_{\gamma\in\mathbb{F}_{q}} \frac{1}{\langle a+\gamma\rangle^{s}}=&\sum_{l=0}^{N}\sum_{\substack{w\in W\\\dim_{\mathbb{F}_{q}}W=l+1}}\frac{1}{(x+w)^{s}}-\sum_{l=1}^{N}\sum_{\substack{w\in W\\\dim_{\mathbb{F}_{q}}W=l+1}}\frac{1}{(x+w)^{s}}\\
&=\sum_{l=0}^{N}\sum_{\substack{w\in W\\\dim_{\mathbb{F}_{q}}W=l+1}}\frac{1}{(x+w)^{s}}-\sum_{l=1}^{N}\sum_{\substack{\tilde{w}\in \widetilde{W}\\\dim_{\mathbb{F}_{q}}\widetilde{W}=l}}\sum_{\beta_{l}\in\mathbb{F}_{q}}\frac{1}{(x+\tilde{w}+\beta_{l}T^{-m-l})^{s}}  \\
&=\sum_{l=0}^{N}\sum_{\substack{w\in W\\\dim_{\mathbb{F}_{q}}W=l+1}}\frac{1}{(x+w)^{s}}-\sum_{l=0}^{N-1}\sum_{\substack{w\in W\\\dim_{\mathbb{F}_{q}}W=l+1}}\sum_{\beta_{l+1}\in\mathbb{F}_{q}}\frac{1}{(x+w+\beta_{l+1}T^{-m-l-1})^{s}}  \\
&=\sum_{l=0}^{N-1}\sum_{\substack{w\in W\\\dim_{\mathbb{F}_{q}}W=l+1}}\left(\frac{1}{(x+w)^{s}}-\sum_{\beta_{l+1}\in\mathbb{F}_{q}}\frac{1}{(x+w+\beta_{l+1}T^{-m-l-1})^{s}}\right)\\
&\quad+\sum_{\substack{w\in W\\\dim_{\mathbb{F}_{q}}W=N+1}}\frac{1}{(x+w)^{s}}\\
&=\sum_{l=0}^{N-1}\sum_{\substack{w\in W\\\dim_{\mathbb{F}_{q}}W=l+1}}\frac{1}{(x+w)^{s}}\left(1-\sum_{\beta_{l+1}\in\mathbb{F}_{q}}\left(\frac{x+w}{x+w+\beta_{l+1}T^{-m-l-1}}\right)^{s}\right)\\
&\quad+\sum_{\substack{k\in\mathbb{F}_{q}[\frac{1}{T}]\\ \textrm{deg}_{\infty}(k)\leq N}} \frac{1}{\langle k+a \rangle^{s}}.
 \end{aligned}
 \end{equation}
Since $x+w$ is a $1$-unit, and $|a|_{\infty}>1$ so $v_{\infty}(a)=-m<0$, we have
$$\left|\frac{\beta_{l+1}T^{-m-l-1}}{x+w}\right|_{\infty}<1$$ 
for $l\in\mathbb{N}\cup\{0\}$.
So \begin{equation}\label{4.6}
\begin{aligned}
\left(\frac{x+w}{x+w+\beta_{l+1}T^{-m-l-1}}\right)^{s}&=\left(1+\frac{\beta_{l+1}T^{-m-l-1}}{x+w}\right)^{-s}\\
&=\sum_{j=0}^{\infty}\binom{-s}{j}\beta_{l+1}^{j}\left(\frac{1}{T}\right)^{(m+l+1)j}\frac{1}{(x+w)^{j}}\end{aligned}
\end{equation}
and 
 \begin{equation}\label{add}
\begin{aligned}
\sum_{\beta_{l+1}\in\mathbb{F}_{q}} \left(\frac{x+w}{x+w+\beta_{l+1}T^{-m-l-1}}\right)^{s}&=\sum_{\beta_{l+1}\in\mathbb{F}_{q}}\left(1+\frac{\beta_{l+1}T^{-m-l-1}}{x+w}\right)^{-s}\\
&=\sum_{j=0}^{\infty}\binom{-s}{j}\left(\sum_{\beta_{l+1}\in\mathbb{F}_{q}} \beta_{l+1}^{j}\right)\left(\frac{1}{T}\right)^{(m+l+1)j}\frac{1}{(x+w)^{j}}.
\end{aligned}
\end{equation} 
 Note that $$\sum_{\beta_{l+1}\in\mathbb{F}_{q}} \beta_{l+1}^{j}=0~\textrm{if}~~(q-1)\nmid j~ \textrm{or}~j=0$$
 and  $$\sum_{\beta_{l+1}\in\mathbb{F}_{q}} \beta_{l+1}^{j}=q-1=-1~\textrm{if}~~(q-1)\mid j~ \textrm{and}~j\geq 1, $$
(\ref{add}) implies 
  \begin{equation}
\begin{aligned}
\sum_{\beta_{l+1}\in\mathbb{F}_{q}} \left(\frac{x+w}{x+w+\beta_{l+1}T^{-m-l-1}}\right)^{s}
=-\sum_{\substack{j=1\\(q-1)\mid j}}^{\infty}\binom{-s}{j}\left(\frac{1}{T}\right)^{(m+l+1)j}\frac{1}{(x+w)^{j}}.
\end{aligned}
\end{equation}  
 Substituting the above expansion into (\ref{lquan2}), we have
 \begin{equation}\label{lquan4}
 \begin{aligned}
\sum_{\gamma\in\mathbb{F}_{q}} \frac{1}{\langle a+\gamma\rangle^{s}}
 &=\sum_{l=0}^{N-1}\sum_{\substack{w\in W\\\dim_{\mathbb{F}_{q}}W=l+1}}\frac{1}{(x+w)^{s}}\left(1+\sum_{\substack{j=1\\(q-1)\mid j}}^{\infty}\binom{-s}{j}\left(\frac{1}{T}\right)^{(m+l+1)j}\frac{1}{(x+w)^{j}}\right)\\
&\quad+\sum_{\substack{k\in\mathbb{F}_{q}[\frac{1}{T}]\\ \textrm{deg}_{\infty}(k)\leq N}} \frac{1}{\langle k+a \rangle^{s}}\\
&=\sum_{l=0}^{N-1}\sum_{\substack{w\in W\\\dim_{\mathbb{F}_{q}}W=l+1}}\frac{1}{(x+w)^{s}}+\sum_{\substack{j=1\\(q-1)\mid j}}^{\infty}\binom{-s}{j}\sum_{l=0}^{N-1}\left(\frac{1}{T}\right)^{(m+l+1)j}\sum_{\substack{w\in W\\\dim_{\mathbb{F}_{q}}W=l+1}}\frac{1}{(x+w)^{s+j}}\\
&\quad+\sum_{\substack{k\in\mathbb{F}_{q}[\frac{1}{T}]\\ \textrm{deg}_{\infty}(k)\leq N}} \frac{1}{\langle k+a \rangle^{s}}\\
&=\sum_{l=0}^{N-1} \sum_{\substack{k\in\mathbb{F}_{q}[\frac{1}{T}]\\ \textrm{deg}_{\infty}(k)\leq l}} \frac{1}{\langle k+a \rangle^{s}}+\sum_{\substack{j=1\\(q-1)\mid j}}^{\infty}\binom{-s}{j}\sum_{l=0}^{N-1}\left(\frac{1}{T}\right)^{(m+l+1)j}\sum_{\substack{k\in\mathbb{F}_{q}[\frac{1}{T}]\\ \textrm{deg}_{\infty}(k)\leq l}} \frac{1}{\langle k+a \rangle^{s+j}}\\
&\quad+\sum_{\substack{k\in\mathbb{F}_{q}[\frac{1}{T}]\\ \textrm{deg}_{\infty}(k)\leq N}} \frac{1}{\langle k+a \rangle^{s}}. \end{aligned}
 \end{equation}
 Taking $N\to\infty$ in the above equality, by (\ref{quan6}), we have $$\lim_{N\to\infty}\left|\sum_{\substack{k\in\mathbb{F}_{q}[\frac{1}{T}]\\ \textrm{deg}_{\infty}(k)\leq N}}\frac{1}{\langle k+a \rangle^{s}}\right|_{\infty}=0$$ and
 \begin{equation}\label{lquan6}
 \begin{aligned}
\sum_{\gamma\in\mathbb{F}_{q}} \frac{1}{\langle a+\gamma\rangle^{s}}
 &=\lim_{N\to\infty}\sum_{l=0}^{N-1}\sum_{\substack{k\in\mathbb{F}_{q}[\frac{1}{T}]\\ \textrm{deg}_{\infty}(k)\leq l}}\frac{1}{\langle k+a \rangle^{s}}\\
 &\quad+\sum_{\substack{j=1\\(q-1)\mid j}}^{\infty}\binom{-s}{j}\lim_{N\to\infty}\sum_{l=0}^{N-1}\left(\frac{1}{T}\right)^{(m+l+1)j}\sum_{\substack{k\in\mathbb{F}_{q}[\frac{1}{T}]\\ \textrm{deg}_{\infty}(k)\leq l}} \frac{1}{\langle k+a \rangle^{s+j}}\\
 &\quad (\textrm{see Proposition \ref{proposition-add}})\\
 &=\sum_{l=0}^{\infty}\sum_{\substack{k\in\mathbb{F}_{q}[\frac{1}{T}]\\ \textrm{deg}_{\infty}(k)\leq l}}\frac{1}{\langle k+a \rangle^{s}}\\
 &\quad+\sum_{\substack{j=1\\(q-1)\mid j}}^{\infty}\binom{-s}{j}\sum_{l=0}^{\infty}\left(\frac{1}{T}\right)^{(m+l+1)j}\sum_{\substack{k\in\mathbb{F}_{q}[\frac{1}{T}]\\ \textrm{deg}_{\infty}(k)\leq l}} \frac{1}{\langle k+a \rangle^{s+j}}.
 \end{aligned}
 \end{equation} 
Then by the definitions of the zeta function $\zeta_{\infty}(s_{0},s,a,n)$ (\ref{newzeta}), we get
\begin{equation}\label{lquan7}
\sum_{\gamma\in\mathbb{F}_{q}} \frac{1}{\langle a+\gamma\rangle^{s}}=\zeta_{\infty}\left(\frac{1}{T},s,a,0\right)+\sum_{\substack{j=1\\(q-1)\mid j}}^{\infty}\binom{-s}{j}\zeta_{\infty}\left(\frac{1}{T},s+j,a,j\right).\end{equation} 
This completes our proof.
\end{proof}

In order to move the limit to the inside of the
summation $\displaystyle\sum_{\substack{j=1\\(q-1)\mid j}}^{\infty}$ in (\ref{lquan6}) of the above lemma, we need to show that the convergence of the inner
limit is uniform  for $N\in\mathbb{N}$. To this end, we add the following proposition.

\begin{proposition}\label{proposition-add}
For $a\in\mathbb{A}$ and $s\in\mathbb{Z}_{p}$, the series $$\sum_{\substack{j=1\\(q-1)\mid j}}^{\infty}\binom{-s}{j}\sum_{l=0}^{N-1} T^{-(m+l+1)j}\sum_{\substack{k\in\mathbb{F}_{q}[\frac{1}{T}]\\ {\rm deg}_{\infty}(k)\leq l}}\frac{1}{\langle k+a \rangle^{s+j}}$$ 
is uniformly convergent for $N\in\mathbb{N}$ and 
\begin{equation}\begin{aligned}&\quad\lim_{N\to\infty}\sum_{\substack{j=1\\(q-1)\mid j}}^{\infty}\binom{-s}{j}\sum_{l=0}^{N-1} T^{-(m+l+1)j}\sum_{\substack{k\in\mathbb{F}_{q}[\frac{1}{T}]\\ {\rm deg}_{\infty}(k)\leq l}}\frac{1}{\langle k+a \rangle^{s+j}}\\
&=\sum_{\substack{j=1\\(q-1)\mid j}}^{\infty}\binom{-s}{j}\lim_{N\to\infty}\sum_{l=0}^{N-1} T^{-(m+l+1)j}\sum_{\substack{k\in\mathbb{F}_{q}[\frac{1}{T}]\\ {\rm deg}_{\infty}(k)\leq l}}\frac{1}{\langle k+a \rangle^{s+j}}.\end{aligned} \end{equation}\end{proposition}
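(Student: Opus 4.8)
The plan is to establish uniform convergence of the series in $n$ by a non-archimedean Weierstrass $M$-test, and then to deduce the interchange of $\lim_{n\to\infty}$ with the outer sum $\sum_{(q-1)\mid i}$ from the elementary fact that, for a uniformly convergent series whose terms each converge, the limit may be taken termwise. The whole argument rests on producing, for the $i$-th summand, an upper bound in $|\cdot|_{\mathbb{S}}$ that is independent of $n$ and decays as $i\to\infty$.

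First I would bound the inner sum over $l$. For each fixed $l$, the valuation $v_{\infty}(T)=-1$ gives $\left|T^{-(m+l+1)i}\right|_{\infty}=(1/e)^{(m+l+1)i}$, and applying the bound (\ref{quan6}) with $s$ replaced by $s+i\in\mathbb{Z}_{p}$ yields
\[
\left|T^{-(m+l+1)i}\sum_{\substack{k\in\mathbb{F}_{q}[\frac{1}{T}]\\ \textrm{deg}_{\infty}(k)\leq l}}\frac{1}{\langle k+a\rangle^{s+i}}\right|_{\mathbb{S}}\leq\left(\frac{1}{e}\right)^{(m+l+1)i+(q-1)\frac{(2m+l)(l+1)}{2}}.
\]
Because $a\in\mathbb{A}$ forces $m\geq 1$, the exponent on the right is strictly increasing in $l$ for $l\geq 0$, so its minimum over $0\leq l\leq n-1$ is attained at $l=0$. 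The ultrametric inequality then bounds the sum over $l$ by its largest term, which is the $l=0$ term, giving the key $n$-free estimate
\[
\left|\sum_{l=0}^{n-1}T^{-(m+l+1)i}\sum_{\substack{k\in\mathbb{F}_{q}[\frac{1}{T}]\\ \textrm{deg}_{\infty}(k)\leq l}}\frac{1}{\langle k+a\rangle^{s+i}}\right|_{\mathbb{S}}\leq\left(\frac{1}{e}\right)^{(m+1)i+(q-1)m}.
\]

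Since $s\in\mathbb{Z}_{p}$ gives $\left|\binom{-s}{i}\right|_{p}\leq 1$, combining this with (\ref{absoluteS}) shows that the $i$-th term of the series is bounded in $|\cdot|_{\mathbb{S}}$ by $M_{i}:=(1/e)^{(q-1)m}(1/e)^{(m+1)i}$, uniformly in $n$, with $M_{i}\to 0$ as $i\to\infty$ because $m+1\geq 2$. In the complete non-archimedean field this is precisely the hypothesis of the $M$-test: the tail $\sum_{i\geq N}$ is bounded by $\max_{i\geq N}M_{i}$ independently of $n$, so the series converges uniformly for $n\in\mathbb{N}$. Each summand moreover has a limit as $n\to\infty$, since its $l$-tail is ultrametrically small by the same estimate used in Proposition \ref{analytic1}; uniform convergence then permits passing to the limit termwise, which is the asserted identity.

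I expect the one genuine point requiring care to be the monotonicity of the exponent $(m+l+1)i+(q-1)(2m+l)(l+1)/2$ in $l$, since it is exactly this that turns the per-$l$ estimate into a bound free of $n$ and hence furnishes the uniformity; the remaining ingredients are the ultrametric inequality and the estimates already assembled in the proof of Proposition \ref{analytic1}.
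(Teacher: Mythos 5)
Your proof is correct, and it follows the same overall strategy as the paper: produce a bound on the $i$-th summand that is independent of $n$ and tends to $0$ as $i\to\infty$, invoke the (non-archimedean) Weierstrass $M$-test for uniform convergence in $n$, and then pass the limit inside termwise. The difference lies in how the $n$-free bound is obtained. The paper does not reuse the quantitative estimate (\ref{quan6}); instead it observes that $(u,s)\mapsto u^{s}$ is continuous on the compact set $U\times\mathbb{Z}_{p}$ (where $U$ is the group of $1$-units), hence uniformly bounded by some constant $M$, which gives $\left|\langle k+a\rangle^{-(s+i)}\right|_{\mathbb{S}}\leq M$ and then, via the ultrametric inequality and $\left|\binom{-s}{i}\right|_{p}\leq 1$, the bound $M(1/e)^{(m+l+1)i}$ for the $i$-th term. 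You instead recycle the explicit estimate (\ref{quan6}) with $s$ replaced by $s+i$ and verify that the resulting exponent $(m+l+1)i+(q-1)(2m+l)(l+1)/2$ is increasing in $l$ (using $m\geq 1$, which does hold since $a\in\mathbb{A}$ forces $v_{\infty}(a)=-m<0$), so the maximum term of the $l$-sum sits at $l=0$. Your route is more self-contained and quantitative: it avoids the compactness argument and an unspecified constant, and it makes explicit the step the paper elides --- the paper's displayed bound in (\ref{pquan5}) still contains the bound variable $l$, whereas the intended $n$-free majorant is the $l=0$ value, exactly the point your monotonicity check settles. The paper's route buys a shorter computation at the cost of introducing the auxiliary continuity/compactness lemma for $u^{s}$. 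Both are valid; no gap in yours.
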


\begin{proof}
Let $$U=1+\frac{1}{T}\mathbb{F}_{q}[[\frac{1}{T}]]$$ 
be the group of $1$-units in $K_{\infty}$.
By \cite[p. 98, the second paragraph]{Goss2}, for any $u=1+\omega\in U$ with $|\omega|_{\infty}<1$ and $s=\sum_{j=j_{0}}^{\infty} c_{j}p^{j} \in \mathbb{Z}_{p}$, we have
\begin{equation}
u^{s}:=\prod_{j}(1+\omega^{p^{j}})^{c_{j}}.
\end{equation}
Thus the map
\begin{equation}
\begin{aligned}
f: U \times \mathbb{Z}_{p}&\rightarrow K_{\infty}^{*}\\
(u,s)&\mapsto u^{s}
\end{aligned}
\end{equation}
is continuous.  Since the sets $U$ and $\mathbb{Z}_{p}$ are compact, the function $f(u,s)=u^{s}$ is uniformly bounded for 
$(u,s)\in U \times \mathbb{Z}_{p},$ that is, there exists a constant $M>0$ such that
\begin{equation}\label{bound}
|u^{s}|_{\infty}\leq M
\end{equation}
for $u\in U$ and $s\in\mathbb{Z}_{p}$.

So for any $k\in\mathbb{F}_{q}[\frac{1}{T}]$, $s\in\mathbb{Z}_{p}$ and $j\in\mathbb{N}$
\begin{equation}
\left|\frac{1}{\langle k+a \rangle^{s+j}}\right|_{\infty}\leq M
\end{equation}
and for  any $l\in\mathbb{N}\cup\{0\}$, $j\in\mathbb{N}$ and $s\in\mathbb{Z}_{p}$ we have
\begin{equation}\label{pquan4}
\begin{aligned}
\left|T^{-(m+l+1)j}\sum_{\substack{k\in\mathbb{F}_{q}[\frac{1}{T}]\\ \textrm{deg}_{\infty}(k)\leq l}}\frac{1}{\langle k+a \rangle^{s+j}}\right|_{\infty}&\leq \left(\frac{1}{e}\right)^{(m+l+1)j}M.
\end{aligned}
\end{equation}
Then combining the estimations (\ref{bino}) and (\ref{pquan4}) we see that
\begin{equation}\label{pquan5}
\begin{aligned}
&\quad\left|\binom{-s}{j} \sum_{l=0}^{N-1} T^{-(m+l+1)j}\sum_{\substack{k\in\mathbb{F}_{q}[\frac{1}{T}]\\ \textrm{deg}_{\infty}(k)\leq l}}\frac{1}{\langle k+a \rangle^{s+j}}\right|_{\infty}\\
&=\left|\binom{-s}{j}\right|_{\infty}\left|\sum_{l=0}^{N-1} T^{-(m+l+1)j}\sum_{\substack{k\in\mathbb{F}_{q}[\frac{1}{T}]\\ \textrm{deg}_{\infty}(k)\leq l}}\frac{1}{\langle k+a \rangle^{s+j}}\right|_{\infty}\\
&\leq M\left(\frac{1}{e}\right)^{(m+l+1)j}.
\end{aligned}
\end{equation}
The limit 
$$\lim_{j\to\infty}M\left(\frac{1}{e}\right)^{(m+l+1)j}=0$$
implies that the series 
\begin{equation}\label{pquan6} M\sum_{j=1}^{\infty}\left(\frac{1}{e}\right)^{(m+l+1)j}\end{equation}
is covergent.
Finally by (\ref{pquan5}), (\ref{pquan6}) and the Weierstrass test (see \cite[p. 230, Theorem 5.1]{Lang2}), we see that the series 
$$\sum_{\substack{j=1\\(q-1)\mid j}}^{\infty}\binom{-s}{j}\sum_{l=0}^{N-1} T^{-(m+l+1)j}\sum_{\substack{k\in\mathbb{F}_{q}[\frac{1}{T}]\\ \textrm{deg}_{\infty}(k)\leq l}}\frac{1}{\langle k+a \rangle^{s+j}}$$ 
is uniformly convergent for  $N\in\mathbb{N}$. Then applying \cite[p. 185, Theorem 3.5]{Lang2} we conclude that  the limit $N\to\infty$  can be moved to the inside of the
above series, which is the desired result. 
\end{proof}
\begin{proof}[Proof of Theorem \ref{Main theorem}.]
With the definition of the forward difference operator $\Delta_{(s,n)}$ (see (\ref{delta})), the shifted identity (\ref{main1}) can be rewritten as
\begin{equation}\label{main12}
\sum_{\gamma\in\mathbb{F}_{q}} \frac{1}{\langle a+\gamma\rangle^{s}}=\zeta_{\infty}\left(\frac{1}{T},s,a,0\right)+\sum_{\substack{j=1\\(q-1)\mid j}}^{\infty}\binom{-s}{j}(1+\Delta_{(s,n)})^{j}\zeta_{\infty}\left(\frac{1}{T},s,a,0\right),
\end{equation}
Then from the definition of  the infinite order linear difference operator $L$ (see (\ref{operator})), it is equivalent to
\begin{equation}\label{main*} 
L\left[\zeta_{\infty}\left(\frac{1}{T},s,a,0\right)\right]=\sum_{\gamma\in\mathbb{F}_{q}} \frac{1}{\langle a+\gamma\rangle^{s}},
\end{equation}
which is the desired result.
\end{proof}

\section*{Acknowledgements} 
The authors are enormously grateful to the anonymous referee for his/her very careful
reading of this paper, and for his/her many valuable and detailed suggestions. 

Su Hu is supported by is supported by the Natural Science Foundation of Guangdong Province, China (No. 2024A1515012337).  Min-Soo Kim is supported by the National Research Foundation of Korea(NRF) grant funded by the Korea government(MSIT) (No. NRF-2022R1F1A1065551).

\bibliography{central}

\end{document}